\documentclass[11pt]{amsart}
\usepackage[utf8]{inputenc}
\usepackage[english]{babel}
\usepackage{amsmath}
\usepackage{amsfonts}
\usepackage{amssymb}
\usepackage{graphicx}
\usepackage[margin=1.5in]{geometry}

\usepackage{mathrsfs}
%\numberwithin{equation}{section}
\newtheorem{thm}{Theorem}%[section]
\newtheorem{res}{Conjecture}
\newtheorem{question}[res]{Open question}
\newtheorem{proposition}[thm]{Proposition}
\newtheorem{corollary}[thm]{Corollary}

\newcommand{\eps}{{\varepsilon}}
\setcounter{tocdepth}{1}
\def\sgn{{\rm sgn}}

\newtheorem{conj}[res]{Conjecture}

\newcommand{\R}{\mathbb{R}}

\newcommand{\Z}{\mathbb{Z}}

\newcommand{\dd}{\delta}

\newcommand{\E}{\mathcal{E}}

\author{Wendelin Werner}
\address{
Department of Mathematics,
ETH Z\"urich,
 R\"amistr. 101,
8092 Z\"urich, Switzerland}
\email
{wendelin.werner@math.ethz.ch}
\title {On clusters of Brownian loops in $d$ dimensions}

\begin{document}

\begin{abstract}
We discuss random geometric structures obtained by percolation of Brownian loops,
in relation to the Gaussian Free Field,  and how their existence and properties depend on the dimension of the ambient space. 
We formulate a number of conjectures for the cases $d=3,4,5$ and prove some results when $d > 6$. 
\end {abstract}

\maketitle 

\begin {center} 
{{\it This paper is dedicated to the memory of Vladas Sidoravicius\footnote{The content of this paper corresponds to the last of my talks that Vladas attended
in 2017 and 2018. Like so many of us in the mathematical community, I remember and miss his enthusiasm as well as his contagious, warm and charming smile.}.}}
\end {center} 

\bigbreak

\tableofcontents

\section{Introduction}

Field theory has been remarkably successful in describing features of many models of statistical physics at their 
critical points. In that approach, the focus is put on correlation functions between the values taken by the field at a certain number of given points in space. 
In many instances, these functions correspond to experimentally measurable macroscopic quantities (such as for instance the global magnetization in the Ising model).

Some of these correlation functions can also be directly related to features of conjectural (and sometimes physically relevant) random fractal geometric objects; for instance,  a $2$-point function $F(x_1, x_2)$ can describe 
the asymptotic behaviour as $\eps \to 0$ of the probability that $x_1$ and $x_2$ are both in the $\eps$-neighbourhood of some ``random cluster'' in a statistical physics model 
 -- and the critical exponent that describes the behaviour of $F$ as $y  \to x$ 
is then related to the fractal dimension of the scaling limits of those clusters. 
This type of more concrete geometric interpretation is however not instrumental in the field-theoretical set-up (and for some fields, there is actually no underlying geometric object). 
It remained for a long time rather hopeless to go beyond this aforementioned partial description of these geometric structures via 
correlation functions, due to the lack of other available mathematical tools to define such random geometric objects in the continuum.

In the  very special case of two-dimensions (which is related to Conformal Field Theory (CFT) on the 
field theory side), this has changed with Oded Schramm's construction of Schramm-Loewner Evolutions (SLE processes) in \cite {Schramm}. These are concrete random curves in the plane defined via some mathematical conformally invariant growth mechanism, and that are conjectured to be relevant for most critical systems in two dimensions. 
The Conformal Loop Ensembles (CLE) that were subsequently introduced in \cite {Sh,SW} are random collection of loops, or equivalently 
random connected fractal sets that are built using variants of SLE, and that are describing the (conjectural) scaling limit of the joint law of all clusters in critical lattice models. 
It should be stressed that all these SLE-based developments are relying on conformal invariance in a crucial manner, so that they are specific to the two-dimensional case. 

In this study of two-dimensional and conformal invariant random structures, the following two random objects have turned out to be very closely related to the SLE and CLE:

- The Gaussian Free Field (GFF): As shown in a series of work by Schramm-Sheffield, Dub\'edat and Miller-Sheffield starting with \cite {SS2,Dub,MS1}, 
this random generalized function essentially turns out to host (in a deterministic way) most SLE-based structures. 
There exists for instance a procedure that allows to deterministically draw a CLE, starting from a sample of a GFF. In particular, the SLE$_4$ and the CLE$_4$ appear 
naturally as generalized level lines of the GFF. Of course, it should be recalled that the GFF is also an elementary and fundamental building block in field theory. 

- The Brownian loop-soups: This object, introduced in \cite {LW}, is a Poissonian cloud of Brownian loops in a domain $D$. If, 
as proposed in \cite {Wcras} and shown in \cite {SW}, one considers {\em clusters of Brownian loops}, and their outer boundaries, one constructs also a CLE$_\kappa$ where $\kappa = \kappa (c)$ varies between $8/3$ and $4$ as the intensity $c$ of the loop-soup varies between $0$ and $1$. This intensity plays the role of the {\em central charge} in  the CFT language. 

There is actually a close relation between these two constructions of CLE$_4$ (via the GFF or via the Brownian loop-soup with intensity $c=1$), see \cite {QW} and the references therein. We will come back to this later, but roughly speaking, starting from a sample of a Brownian loop-soup, one can construct a GFF in such a way that the Brownian loop-soup clusters can be interpreted as ``excursion sets'' of the GFF, a little bit like the excursion intervals away from $0$ of one-dimensional Brownian motion, see \cite {ALS2} and the references therein. 

The starting point of the present paper is the observation that both the Brownian loop-soup and the Gaussian Free Field can be defined in any dimension. This leads naturally to wonder what natural random fractal subsets of $d$-dimensional space for $d \ge 3$ can be built using these special and natural objects. In particular, one can guess that just as in two dimensions,  clusters of Brownian loops (for a loop-soup of intensity $c=1$) will have an interesting geometry, and argue that they should be fundamental structures within a GFF sample.  A first immediate reaction is however to be somewhat cautious or even sceptical. Indeed, Brownian loops in dimensions $4$ and higher are simple loops, and no two loops in a Brownian loop-soup will intersect, 
so that a Brownian loop-soup cluster will a priori consist only of one single isolated simple loop. 
But, as we shall explain in the present paper, things are more subtle, and, if properly defined, it should still be possible to agglomerate these disjoint Brownian loops into interesting clusters when the dimension of the space is 4 and 5. 

The structure of the present paper is the following: We will first review some basic facts about Lupu's coupling of the GFF and loop-soups on cable graphs. After discussing heuristically some general aspects of their scaling limits and reviewing the known results in $d=2$, we will make conjectures about the cases $d=3,4,5$. Then, we will state and derive some results for $d > 6$.

We conclude this introduction with the following remark: It is interesting that this loop-soup approach to the GFF bears many 
similarities with the random walk representations of fields as initiated by Symanzik \cite {Symanzik} and further developed by many papers, including by Simon \cite {Si}, the celebrated work by Brydges, Fr\"ohlich and Spencer \cite {BFS} or Dynkin \cite {Dy}. Their motivation was actually to understand/describe ``interacting fields'' (i.e., beyond the free field!) via their correlation functions; given that the correlation functions of the GFF are all explicit, there was then not much motivation to study it further, while the question of existence and constructions of non-Gaussian fields was (and actually still is) considered to be an important theoretical challenge.

\section {Background: Lupu's coupling on cable-graphs} 

A crucial role will be played here by the cable-graph GFF and the cable-graph loop-soup, that have been introduced by Titus Lupu in \cite {Luputhesis,Lupu}. Let us briefly review their main features in this section, and we refer to those papers for details. 

In this section, we consider ${\mathcal D}$ to be a fixed connected (via nearest-neighbour connections) subset of $\Z^d$ (the case of subsets of $\delta \Z^d$ is then obtained simply by scaling space by a factore $\delta$) on which the discrete Green's function is finite. 
We can for instance take ${\mathcal D}$ to be all (or any connected subset) of $\Z^d$ when $d \ge 3$, or a bounded subset of $\Z^2$. The set $\partial {\mathcal D}$ is the set of points that is at distance exactly $1$ of ${\mathcal D}$.  The Green's function $G(x,y)= G_{{\mathcal D}} (x,y)$ is the expected number of visits of $y$ made by a simple random walk in $\Z^d$ starting from $x$ before exiting ${\mathcal D}$ (if this exit time is finite, otherwise count all visits of $y$).  

The cable graph ${\mathcal D}_c$ associated to ${\mathcal D}$ 
is the set consisting of the union of ${\mathcal D}$ with all  edges (viewed as open intervals of length $1$) that have at least one endpoint in ${\mathcal D}$.
One can define also Brownian motion on the cable graph (that behaves like one-dimensional Brownian motion on the edges and in an isotropic way when it is at a site of ${\mathcal D}$).
One can then also define the Green's function $G_{{\mathcal D}_c}$ for this Brownian motion (this time, the boundary conditions correspond to a killing when it hits $\partial {\mathcal D}$) and note that its values on ${\mathcal D} \times {\mathcal D}$ coincide 
with that of the discrete Green's function $G_{\mathcal D}$ for the discrete random walk. 

One can then on the one hand define the Gaussian Free Field (GFF) on the cable graph $(\phi(x))_{x \in {\mathcal D}}$ as a centred Gaussian process with covariance given by the Green's function 
$G_{{\mathcal D}_c}$ on the cable graph. This is a random continuous function on ${\mathcal D}_c$ that generalizes Brownian motion (or rather Brownian bridges) to the case where the time-line is replaced by the graph ${\mathcal D}_c$. The process $(\phi^2 (x))_{x \in {\mathcal D}_c}$ is then called a {\em squared GFF} on ${\mathcal D}_c$. The connected components of $\{ x \in {\mathcal D}_c, \ \phi (x) \not= 0 \}$ are called the {\em excursion sets} of $\phi$ (or equivalently of $\phi^ 2$). 

On the other hand, one can also define a natural Brownian loop measure on Brownian loops on ${\mathcal D}_c$, and then the Brownian loop-soups which are Poisson point processes with intensity given by a multiple $c$ of this 
loop measure. In all the sequel, we will always work with Brownian loop-soups with intensity equal to $c=1$ (in the normalization that is for instance described in \cite {WP} -- in the Le Jan-Lupu normalization that differs by a factor $2$, this would be the loop-soup with intensity $\alpha=1/2$), which is the one for which one can make the direct relation to the GFF. 
Let us make two comments about this loop-soup ${\mathcal L}$ on the cable-graph:  

(i) When one considers a given point on the cable-graph, it will be almost surely visited by an infinite number of small Brownian loops in the loop-soup. However, it turns out that there almost surely exist
exceptional points in the cable-graph that are visited by no loop in the loop-soup (what follows will actually show that the set ${\mathcal Z}$ of such points has Hausdorff dimension $1/2$).  Another equivalent way to define these sets is to first consider {\em clusters of Brownian loops}: We say that two loops $\gamma$ and $\gamma'$ in a loop-soup belong to the same loop-soup cluster, if one can find a finite chain of loops $\gamma_0= \gamma, \gamma_1, \ldots, \gamma_n=\gamma'$ in ${\mathcal L}$ such that $\gamma_j \cap \gamma_{j-1} \not= \emptyset$ for $j =1, \ldots , n$. Then, loop-soup clusters are exactly the connected components of ${\mathcal D}_c \setminus {\mathcal Z}$. 

(ii) Just in the same way in which the occupation time measure of one-dimensional Brownian motion has a continuous density with respect to Lebesgue measure (the {\em local time} of Brownian motion, see e.g. \cite {RY}), each Brownian loop $\gamma$ will have an occupation time measure with a finite intensity $\ell_\gamma$ on the cable graph, so that for all set $A$, the total time spent by $\gamma$ in $A$ is equal to $\int_A \ell_\gamma (x) dx$ where $dx$ denote the one-dimensional Lebesgue measure on ${\mathcal D}_c$. One can then define the ``cumulative'' occupation time density $\Gamma$ of the loop-soup as $\Gamma := \sum_{\gamma \in {\mathcal L}} \ell_\gamma$. This is a continuous function on the cable-graph, that is equal to $0$ on all points of $\partial {\mathcal D}$. Simple properties of Brownian local time show that ${\mathcal Z} = \{ x \in {\mathcal D}_c, \  \Gamma(x) = 0 \}$.

Lupu's coupling between the cable-graph loop-soup and the GFF can now be stated as follows.  

\begin {proposition}[Le Jan \cite {LJ} and Lupu \cite {Lupu}]
\label {prop0}
Suppose that one starts with a Brownian loop-soup ${\mathcal L}$ on the cable-graph ${\mathcal D}_c$. Then the law of its total occupation time density $\Gamma$ is that of (a constant multiple) of a squared GFF. 
Furthermore, if one then defines the function $U=\sqrt {\Gamma}$ and tosses i.i.d. $\pm$ fair coins $\eps_j$ (one for each excursion set $K_j$ of $\Gamma$), then if we write $\eps (x)= \eps_j$ for $x \in K_j$,  the function
$(\eps(x) U(x))_{x \in {\mathcal D}_c}$ is distributed exactly like (a constant multiple of) a GFF on the cable-graph.  
\end {proposition}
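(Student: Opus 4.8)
The plan is to prove the two assertions in turn: first that the occupation field $\Gamma$ has the law of a constant multiple of a squared cable GFF (the Le Jan isomorphism), and then, building on this, that the extra coin-flips reconstruct the signed field (Lupu's refinement). For the first part I would match Laplace transforms, equivalently the moment generating functions of the associated quadratic forms. Writing $\chi$ for a nonnegative test function on $\mathcal{D}_c$, $\Lambda$ for the associated multiplication operator, and $\langle\chi,\Gamma\rangle$ for the pairing $\int_{\mathcal{D}_c}\chi\,\Gamma$, the exponential formula for the Poisson point process $\mathcal{L}$ (with intensity measure $\mu$) gives
$$ \mathbb{E}\!\left[ e^{-\langle \chi, \Gamma\rangle} \right] = \exp\!\left( - \int \bigl( 1 - e^{-\langle \chi, \ell_\gamma\rangle} \bigr)\, \mu(d\gamma) \right), $$
so everything reduces to the loop-measure integral on the right. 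On the discrete graph this is Le Jan's classical computation: decomposing loops according to their sequence of visited vertices and summing the resulting series identifies the integral with $\tfrac12\log\det(I+G\Lambda)$ in the $c=1$ normalization, so the transform equals $\det(I+G\Lambda)^{-1/2}$. Since the Gaussian computation gives $\mathbb{E}[\exp(-\tfrac12\langle\chi,\phi^2\rangle)]=\det(I+G\Lambda)^{-1/2}$ for $\phi$ centred Gaussian with covariance $G$, the two sides coincide, and this matching is exactly what fixes the normalization constant in the statement.

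The genuinely extra work for the cable graph is to account for the loops (and loop-pieces) living inside the open edges, and for the field values on the edges themselves. Here I would condition on the values at the two endpoints of each edge: on the GFF side the field restricted to an edge is then an independent Brownian bridge interpolating those values, while on the loop-soup side the occupation field on that edge is produced by the edge-confined loops together with the excursions into the edge made by vertex-visiting loops. The identity that these two descriptions agree is precisely a (second) Ray--Knight type theorem relating Brownian local times to squared Bessel/squared-bridge processes, the continuum analogue of the discrete isomorphism; assembling these edge identities with the discrete vertex computation yields the first assertion. This edge bookkeeping is the technically heaviest part of the first half.

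Granting the first assertion, $U=\sqrt\Gamma$ has the law of $|\phi|$ for a cable GFF $\phi$, so it suffices to show that, conditionally on $|\phi|$, the signs of $\phi$ on its excursion sets are independent fair coins; the signed field $\eps U$ then has the law of $\phi$. The two ingredients are the Markov property of the cable GFF -- being a Gaussian field whose covariance is a Green's function, conditioning on its values along a separating set decouples the two sides -- and the symmetry $\phi\mapsto-\phi$ in law under Dirichlet boundary conditions. Since $\phi$ is continuous and vanishes exactly on $\mathcal{Z}$, each excursion set $K_j$ carries a well-defined constant sign; conditioning on $|\phi|$ fixes the zero set $\mathcal{Z}$, which separates the $K_j$ from one another, so the restrictions $\phi|_{K_j}$ become conditionally independent, each distributed as $\pm\,|\phi|$ on $K_j$ with a symmetric sign by the Dirichlet reflection symmetry. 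This yields i.i.d. fair coins.

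I expect the main obstacle to be making this last conditioning rigorous: the separating set $\mathcal{Z}$ is a random fractal of Hausdorff dimension $1/2$ and Lebesgue-null, so one cannot naively condition on $\{\phi=0 \text{ on } \partial K_j\}$ or apply the Markov property along a fixed surface. I would handle this exactly as in one-dimensional excursion theory, where the signs of the excursions of Brownian motion away from $0$ are i.i.d. given $|B|$: I would exploit that along the one-dimensional cables $|\phi|$ is a strong Markov process, so that the sign flips attached to successive excursions away from $\mathcal{Z}$ are independent of the absolute value by reflection at each zero, and formalize the ``flip the sign on a single excursion set'' operation as a measure-preserving involution of the GFF law. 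This reflection and excursion-decomposition step, rather than the Gaussian and determinantal computations, is where the real content lies.
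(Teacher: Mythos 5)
First, a point of comparison: the paper itself gives no proof of this proposition --- it is quoted as background, attributed to Le Jan and Lupu, with the reader referred to \cite{LJ,Lupu} for details. So your proposal can only be measured against the arguments in those references, and on that score it is essentially a correct reconstruction of them. Your first half is exactly Le Jan's argument: the Campbell/exponential formula for the Poisson point process reduces everything to the loop-measure integral, the discrete computation identifies that integral with $\tfrac12\log\det(I+G\Lambda)$ at intensity $c=1$, and matching against the Gaussian identity $E[\exp(-\tfrac12\langle\chi,\phi^2\rangle)]=\det(I+G\Lambda)^{-1/2}$ both proves the discrete isomorphism and pins the constant (namely $\Gamma\overset{d}{=}\tfrac12\phi^2$). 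Your treatment of the edges --- conditioning on vertex values so that the GFF restricted to each edge becomes an independent Brownian bridge, and matching its local times against the occupation field of edge-excursions and edge-confined loops via Ray--Knight-type identities --- is also the route taken in Lupu's work on loop ensembles of one-dimensional diffusions.

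The one place where your sketch makes a claim that is false as stated is in the final step: you assert that ``along the one-dimensional cables $|\phi|$ is a strong Markov process,'' and you want to flip signs independently at each zero by reflection. On a general cable graph this fails: an interior point $x$ of an edge does not separate the graph (the two sides of $x$ are reconnected through the rest of ${\mathcal D}_c$), so conditioning on $\phi(x)=0$ does \emph{not} decouple the field on the two sides of $x$, and $|\phi|$ restricted to a single cable is not Markov. The correct implementation --- and this is what Lupu actually does --- is to condition on the vertex values (which does invoke a legitimate Markov property, since the vertex set separates the edges), making the edge restrictions independent bridges; then, for any edge on which $|\phi|$ vanishes somewhere, the decomposition of the bridge at its first and last zeros shows that the conditional density of $|\phi|$ on that edge given the signed endpoint values is the same for all four sign choices, so the sign interaction across that edge factorizes; components confined to the interior of edges get i.i.d.\ signs by classical one-dimensional excursion theory. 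Aggregating these factorizations is what yields that, given $|\phi|$, the signs of the excursion components are i.i.d.\ fair coins. Your instinct that this conditioning step is where the real content lies is right; but the mechanism is ``vertex conditioning plus bridge decomposition at zeros,'' not a strong Markov property of $|\phi|$ along cables, and as written your argument would not go through on any graph that is not a tree of intervals.
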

In the sequel, we will always implicitly assume that a GFF $\phi$ on a cable-system is coupled to a loop-soup ${\mathcal L}$ in this way. We can note that the excursion sets of $\phi$ are then exactly the loop-soup clusters of ${\mathcal L}$. 

We see that in this setting, the only contribution to the correlation 
between $\phi(x)$ and $\phi (y)$  comes from the event that $x$ and $y$ are in the same loop-soup cluster (we denote this event by $x \leftrightarrow y$), i.e., one has
$$ E [ \phi(x) \phi (y) ] = E [ \eps (x) \eps (y) \times | \phi (x) | \times | \phi(y) | ] 
= E [ |\phi (x)| \times  |\phi (y)| \times  1_{x \leftrightarrow y} ]
$$
for all $x,y$ in ${\mathcal D}_c$.
In the last expression, all quantities are functions of the loop-soup only (and do not involve the $\eps_j$ coin tosses). Similarly, all higher order correlation functions and moments can be expressed only in terms of the cable-graph loop-soup. 

Conversely, since the law of the GFF is explicit and the correlations between $\eps(x)= \sgn ( \phi (x))$ is given in term of cable-graph loop-soup connection events, one gets 
explicit formulas for those connection probabilities. For instance, Proposition \ref {prop0} immediately shows that for all $x,y$ in ${\mathcal D}_c$,
$$E [ \sgn (\phi(x)) \sgn (\phi(y)) ] = E [ \eps (x) \eps (y)] =  P [ x \leftrightarrow y ], $$ 
from which one readily deduces that:
\begin {corollary}[Part of Proposition 5.2 in \cite {Lupu}]
\label {prop1} For all $x \not= y$ in ${\mathcal D}_x$,  
$$ P [ x \leftrightarrow y ] = \arcsin \frac { G(x,y)}{\sqrt {G(x,x) G(y,y) }}.$$ 
\end {corollary}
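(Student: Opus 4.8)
The plan is to reduce the statement to a single classical Gaussian identity, since the coupling of Proposition \ref{prop0} has already done the probabilistic work. Indeed, the display immediately preceding the statement gives $E[\sgn(\phi(x))\sgn(\phi(y))] = P[x \leftrightarrow y]$, so it suffices to evaluate the left-hand side, a quantity that depends only on the joint Gaussian law of the pair $(\phi(x),\phi(y))$.

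First I would record that, by Proposition \ref{prop0}, $(\phi(x),\phi(y))$ is a centred Gaussian vector whose covariance matrix has diagonal entries $G(x,x), G(y,y)$ and off-diagonal entry $G(x,y)$, where $G = G_{{\mathcal D}_c}$ is the covariance of the cable-graph GFF (and coincides with the discrete Green's function on ${\mathcal D} \times {\mathcal D}$). The fact that Proposition \ref{prop0} identifies $\phi$ only up to a fixed positive constant is harmless here: $\sgn$ is invariant under multiplication by a positive scalar, and the only quantity that enters the answer is the correlation coefficient $\rho := G(x,y)/\sqrt{G(x,x)G(y,y)}$, which is unchanged if each coordinate is rescaled by a positive constant. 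Thus the corollary follows once one knows the arcsine law $E[\sgn(A)\sgn(B)] = \tfrac{2}{\pi}\arcsin(\rho)$ for a centred bivariate Gaussian $(A,B)$ with correlation $\rho$.

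To prove this last identity I would normalise $A$ and $B$ to unit variance (again without changing their signs), so that $(A,B)$ is standard bivariate normal with correlation $\rho$, and then compute the orthant probability $p := P[A>0,\,B>0]$. The most transparent route uses rotational invariance: the whitening map sends the density to the standard isotropic Gaussian, for which the probability of an angular sector equals its opening angle divided by $2\pi$; the quadrant $\{A>0,\,B>0\}$ is carried to a sector of opening angle $\arccos(-\rho)$ (read off from the off-diagonal sign of the inverse covariance matrix), whence $p = \arccos(-\rho)/(2\pi) = \tfrac14 + \arcsin(\rho)/(2\pi)$. Using the symmetry $(A,B)\mapsto(-A,-B)$ and the fact that the four quadrant probabilities sum to $1$, one finds $E[\sgn(A)\sgn(B)] = 4p-1 = \tfrac{2}{\pi}\arcsin(\rho)$; equivalently one may differentiate $p$ in $\rho$ (Price's theorem gives $\partial_\rho p = 1/(2\pi\sqrt{1-\rho^2})$) and integrate from the independent case $\rho=0$. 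Combining this with the reduction above yields $P[x\leftrightarrow y]=\tfrac{2}{\pi}\arcsin(\rho)$, which is the content of Corollary \ref{prop1}.

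I do not expect a genuine obstacle: the substantive input is the coupling of Proposition \ref{prop0}, which converts a connectivity probability into a Gaussian sign correlation, after which only the standard orthant formula for a bivariate normal remains. The only points requiring a little care are the bookkeeping of the undetermined positive constant in Proposition \ref{prop0} and the reduction to unit variances, both of which are immaterial precisely because signs and correlation coefficients are invariant under positive rescalings of the coordinates.
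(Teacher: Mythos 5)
Your proof is correct and follows essentially the paper's own route: the paper obtains $P[x \leftrightarrow y] = E[\sgn(\phi(x))\sgn(\phi(y))]$ from Proposition \ref{prop0} and then says one ``readily deduces'' the corollary, the suppressed deduction being exactly the Gaussian orthant (Sheppard) identity $E[\sgn(A)\sgn(B)] = \tfrac{2}{\pi}\arcsin(\rho)$ that you prove in detail, with the scale-invariance remark correctly disposing of the undetermined constant in the coupling. One point worth flagging: your computation produces the prefactor $\tfrac{2}{\pi}$, which is in fact correct (it appears in Lupu's Proposition 5.2, and without it the right-hand side could exceed $1$ as $G(x,y)/\sqrt{G(x,x)G(y,y)} \to 1$), so the corollary as printed in the paper is missing this factor by typographical oversight rather than your derivation being off.
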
 
In particular, if one considers the cable-graph loop-soup in $\Z^d$ for $d \ge 3$, we see that 
\begin {equation}
 \label {twopoint}
 P [ 0 \leftrightarrow x ] \sim \frac C {\| x \|^{d-2}} 
\end {equation}
for some constant $C$ as $x \to \infty$.

We can note that in this case of $\Z^d$ for $d \ge 3$, $P [ 0 \leftrightarrow x ]$ and $E [ |\phi(0)| | \phi (x)| 1_{0 \leftrightarrow x}]$ 
are comparable when $x \to \infty$. Loosely speaking, this means that when one conditions on $0 \leftrightarrow x$ (and lets $x \to \infty$), the number of small Brownian loops (say of diameter between $1$ and $A$ for a fixed $A$) that pass through the origin does not blow up (this type of considerations can easily be 
made rigorous -- the conditional law of $|\phi(0)|$ in fact remains tight as $x \to \infty$).

\subsection*{Remark} 
{\sl
Throughout this paper, we will always work with loop-soups defined under the very special intensity $c=1$ that makes its occupation time related to the GFF as described above. Understanding features of the ``percolation phase transition''
when the loop-soup intensity varies is a question that will not be discussed here (see \cite {Lupu3,CS,Duetal} and the references therein for results in this direction). }

\section {The fine-mesh and continuum limit}

When $D$ is a connected subset of $\R^d$, in which the continuum Green's function $G_D(x,y)$ is finite when $x \not= y$ (one can for instance think of $D$ to be the unit disk in $\R^2$, or the whole of $\R^d$ when $d \ge 3$), instead of sampling 
a Brownian loop-soup or a continuum GFF directly in $D$, we will consider a Brownian loop-soup and a GFF defined on the cable-graph of a connected fine-grid approximation of $D$ in $\delta {\Z}^d$. For instance (this slightly convoluted definition is just to avoid issues with ``thin'' boundary pieces), if $z_0$ is a given point in $D$, when $\delta$ is small enough, we can choose $D_{\delta}$ to be the connected component of the set of points in $\delta \Z^d$ that are at distance at least $\delta$ from the complement of $D$, and that contains the points that are at distance less than $\delta$ from $z_0$. One can then consider its cable graph $D_{\delta, c}$ and the corresponding GFF and loop-soups as in the previous section (just scaling space by a factor $\delta$). 

We now discuss what happens in the fine-mesh limit (when $\delta \to 0$). 
To avoid confusion, we will use the following terminology: 

- The {\em cable-graph loop-soup} and the {\em cable-graph clusters} will respectively be the soup of Brownian loops defined on the cable graph $D_{\delta,c}$ and the corresponding collection of clusters. 

- The {\em Brownian loop-soup} will be the usual continuum Brownian loop-soup in $D$. The clusters that are created via intersecting Brownian loops will be called {\em Brownian loop-soup clusters}.

Now, when the mesh of the lattice $\delta$ goes to $0$, one can consider the joint limit in distribution of the cable-graph loop-soup, of the corresponding cable-graph clusters and of the cable-graph GFF, and make the following observations:

(i) [About the limit of the loop-soup]   If one sets any positive macroscopic cut-off $a$, then the law of the loops in the cable-graph loop-soup which have a diameter greater than $a$ does converge to that of the loops with diameter greater than $a$ 
in a (continuum) Brownian loop-soup in $D$. This follows from rather standard approximations of Brownian motion by random walks (see \cite {LTF} for this particular instance). So, in that sense, the scaling limit of the cable graph loop-soup is just the Brownian loop-soup in $D$.  
 By Skorokhod's representation theorem, we can also view a Brownian loop-soup in $D$ as an almost sure limit of cable-graph loop-soups.

(ii) [About the limit of the cable-graph GFF] The cable-graph GFF does converge in law to the continuum GFF, because the correlation functions of the cable-graph GFF converge to those of the continuum GFF (all this is due to elementary consideration on Gaussian processes). It should however be stressed that the continuum GFF is not a random function anymore (see for instance \cite {WP}) so that this weak convergence has to be understood in the appropriate function space. 

(iii) [A warning when $d \ge 4$] While the GFF and the Brownian loop-soup are well-defined in any dimension, it is possible to make sense neither of the (renormalized) square of the GFF nor of the (renormalized) total occupation time measure of the Brownian loop when  $d \ge 4$. This is due to the fact that the total occupation time of the Brownian loops 
of diameter in $[2^{-n}, 2^{-n+1}]$ inside a box of size $1$ will have a second moment of the order of a constant times $2^{n(d-4)}$, which is not summable as soon as $d \ge 4$ (so that the fluctuations of the occupation times of the very small loops will outweigh those of the macroscopic ones). 
Since the relation between the cable-graph GFF and the cable-graph loop-soup did implicitly involve the square of the cable-graph GFF, this indicates that some caution is needed when one tries to tie a direct relation between the continuum GFF and the Brownian loop-soup in $\R^d$ when $d \ge 4$. 

Despite (iii), one can nevertheless always study the joint limit of the coupled cable-graph GFF and  cable-graph loop-soup (and its clusters). 
The correlation functions of the cable-graph GFF do provide information on the structure of the cable graph clusters, and
therefore on their behaviour as $\delta \to 0$, as illustrated by Corollary~\ref {prop1}.
One key point is that the scaling limit of the cable graph clusters (if they exist) might be strictly larger than the Brownian loop-soup clusters. Indeed, cable graph clusters may contain loops of macroscopic size (say, some of 
the finitely many loops of diameter greater than some cut-off value  $a$), but they will also contain many small loops, for instance of diameter comparable to the mesh-size $\delta$, or to  $\delta^b$ for some positive power $b$. 
All these small loops do disappear from the loop-soup in the scaling limit if one uses the procedure described in (i), but (just as critical percolation does create macroscopic clusters made of union of edges of size  equal to the mesh-size, while each individual edge does ``disappear'' in the scaling limit) their 
cumulative effect in terms of contributing to create macroscopic cable graph clusters does not necessarily vanish.

In the fine-mesh limit, there a priori appear to be four possible likely scenarios (for presentation purposes, we will consider in the remaining of this section that $D$ is the hypercube $(0,1)^d$): 

\begin {itemize} 
\item 
Case 0. 
There is no limiting joint law for the cable graph clusters when $\dd \to 0$. This should for instance be the case when the number of macroscopic cable graph clusters in $D_\delta$  tends to infinity as $\dd \to 0$. We will come back to this interesting case later. In the remaining cases 1, 2a and 2b, we will assume that the number of cable graph clusters of diameter greater than any fixed $a$ remains tight, and that their joint law has a scaling limit as $\dd \to 0$. 

 \item 
Case 1: 
The limit of the family of macroscopic cable graph clusters is exactly the family of macroscopic clusters Brownian loop-soup clusters.  This means that in this case, the effect of the microscopic loops disappears as $\delta$ vanishes. 

\item Case 2: The limit of the cable graph clusters consists of macroscopic Brownian loops that are somehow agglomerated together also by the effect of the microscopic loops (i.e., the limit of the cable graph clusters are strictly larger than the clusters of macroscopic Brownian loops). Here, the limit of the cable graph clusters would consist of a combination of macroscopic effects and microscopic effects. There are actually two essentially  different subcases: 

- Case 2a: The glueing procedure does involve additional randomness (i.e., randomness that is not present in the Brownian loop soup).

- Case 2b: The 
glueing procedure of how to agglomerate the macroscopic loops is a deterministic function of these macroscopic loops (i.e.,  the limit of the cable-graph clusters is a deterministic function of the corresponding Brownian loop-soup). 

\end {itemize}

Let us summarize here already the conjectures that we will state more precisely in the next sections. We will conjecture that each of the four cases 0, 1, 2a and 2b do occur for some value of the dimension. More specifically, in dimension $d=2$, it is known that Case 1 holds, and we believe that this should also the case when $d=3$, although a proof of this fact appears to remain surprisingly elusive at this point. So, in those lower dimensions, only the macroscopic (in the scaling limit, Brownian) loops 
prevail to construct the excursion sets of the GFF. 
For intermediate dimensions, microscopic loops will start to play an important role: 
As we will try to explain, it is natural to expect that Case 2b holds for $d=4$ and that Case 2a holds for $d=5$. These are two quite fascinating instances, with an actual interplay between microscopic and macroscopic features.

In higher dimensions, one can adapt some ideas that have been developed in the context of (ordinary) high-dimensional percolation to show that Case 0 holds. There is no excursion decomposition of the continuum GFF anymore, but a number of instructive features can be highlighted. A ``typical'' large cable graph cluster will actually contain no macroscopic Brownian loop (even though some exceptional clusters will contain big Brownian loops). Hence, this loop-soup percolation provides a simple percolation-type model that somehow explains ``why'' general high-dimensional critical percolation models should exhibit ``Gaussian behaviour. Indeed, the collection of all these cable-graph clusters is actually very similar to that of ordinary percolation (as they are constructed using only small loops of vanishingly small size at macroscopic level). This in turn sheds some light onto some of the lace-expansion ideas. 

We will now discuss separately the different dimensions. We will first briefly review what is known and proved when $d=2$ and mention the conjectures for $d=3$. 
We will then heuristically discuss the cases $d=4$ and $d=5$ and make some further conjectures, based on some analogies with features of critical percolation within Conformal Loop Ensembles. 
Finally, we will state and prove some results in the case where the dimension is greater than $6$. We note that we will (as often in these percolation questions) not say anything about the ``critical'' case $d=6$ here.

\section {Low and intermediate dimensions} 

\subsection {Low dimensions} 

\subsubsection {Review of the two-dimensional case} 

This is the case where the behaviour of the scaling limit of cable-graph loop-soup clusters is by now essentially fully understood. Indeed, in this case, one has an additional direct good grip on features of the continuum GFF that are built on its coupling with the SLE$_4$ curves (as initiated in \cite {SS2}) and the CLE$_4$ loop ensembles. The paper \cite {SW} provides an explicit description of the Brownian loop-soup clusters as CLE$_4$ loops, so that one can deduce some explicit formulas (such as in \cite {WW}) for the laws of these clusters. These formulas turn out to match exactly the ones that appear in the scaling limit of cable-graph clusters (in the spirit of the formulas by Le Jan \cite {LJ}), so that one can conclude (this is one of the main results of \cite {Lupu2}) that the scaling limit of the cable-graph loop-soup clusters are exactly the Brownian loop-soup clusters (see also some earlier discussion of this problem without the cable-graph insight in \cite {vdBCL}). 

It is then actually possible to push this further: One important result in \cite {ALS1,ALS2} is  that if one associates to each Brownian loop-soup cluster $C_j$ a particular ``natural'' measure $\mu_j$ supported on $C_j$ (which is a  deterministic function of this cluster $C_j$), then, if $(\eps_j)$ are i.i.d. $\pm 1$ fair coin flips, the sum 
$ \sum_j \eps_j \mu_j $ (viewed as an $L^2$ limit) is actually a continuum GFF. In other words, the Brownian loop-soup clusters provide indeed a loop-soup based ``excursion decomposition'' of the continuum GFF despite the fact that the GFF is not a continuous function (it is only a generalized function).

\subsubsection {Conjectural behaviour in dimension 3}

When $d=3$, one can recall that Brownian paths (and loops) have many double points (the Hausdorff dimension of the set of double points in actually equal to $1$). Hence, a Brownian loop in a Brownian loop-soup will almost surely intersect infinitely many other Brownian loops in this loop-soup. From this, one can actually deduce that the Hausdorff dimension $\Delta$ of the Brownian loop-soup clusters is almost surely greater than 2 
($0-1$ law arguments show that this dimension actually always takes the same constant value). On the other hand, Corollary \ref {prop1} can be used to prove that $\Delta$ can not be larger than $5/2$.  
It is natural to conjecture that:

\begin {conj}
 Just as in two dimensions, the scaling limit of the cable-graph loop-soup clusters in three dimensions should exactly be the collection of Brownian loop-soup clusters. The dimension $\Delta$ of these clusters should be equal to $5/2$.
\end {conj}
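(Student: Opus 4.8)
The final displayed statement is a Conjecture, not a theorem. What follows is therefore a *proposal for how a proof might be organized*, clearly flagged as such, rather than a claim of a complete argument.

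The plan has two essentially independent halves, corresponding to the two assertions in the conjecture: (a) that the scaling limit of the cable-graph clusters coincides with the Brownian loop-soup clusters (i.e.\ that \emph{Case 1} holds in $d=3$), and (b) that the Hausdorff dimension $\Delta$ equals $5/2$. For part (b), the excerpt already supplies the upper bound $\Delta \le 5/2$ via Corollary~\ref{prop1} and the two-point estimate \eqref{twopoint}; the matching lower bound is the substantive analytic task. The standard route is a second-moment (energy) method: fix a macroscopic cluster $C$, and try to construct a random measure $\mu$ supported on $C$ (for instance the occupation-density measure restricted to $C$, normalized appropriately) whose $s$-energy $\int\int \|x-y\|^{-s}\,d\mu(x)\,d\mu(y)$ is finite for every $s<5/2$. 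The first moment $E[\mu(B)]$ of the mass in a small ball should scale like $r^{5/2}$, matching \eqref{twopoint} with $d=3$ so that $d-2=1$ and the correct one-point exponent emerges as $(d-2)/2 + \text{boundary correction} = 5/2$; the energy estimate then reduces to controlling the two-point connection probability $P[x\leftrightarrow y \mid x,y \in C]$ against $\|x-y\|^{-1}$, which Corollary~\ref{prop1} delivers asymptotically. By Frostman's lemma, finiteness of the $s$-energy for all $s<5/2$ forces $\Delta \ge 5/2$ almost surely.

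For part (a), the architecture should mirror the two-dimensional proof of \cite{Lupu2} as closely as the loss of conformal invariance permits. The goal is to show that adding the microscopic loops (diameter $\le a$) to the macroscopic Brownian loop-soup clusters does not enlarge them in the limit---i.e.\ that \emph{Case 1}, not Case 2, occurs. The natural strategy is a \emph{tightness plus identification of limits} argument: first establish that the number of macroscopic cable-graph clusters of diameter $\ge a$ is tight as $\delta\to 0$ (this rules out Case~0 and is where the high-dimensional analogy warns us things go wrong only for $d>6$); then show that the finite-dimensional connection events of the cable-graph clusters converge, using Corollary~\ref{prop1} and the convergence of the discrete Green's function to the continuum Green's function, to the corresponding connection probabilities for Brownian loop-soup clusters. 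Because the sign correlations $E[\eps(x)\eps(y)]=P[x\leftrightarrow y]$ are governed by the GFF, and the cable-graph GFF converges to the continuum GFF by observation (ii) of the preceding section, one expects the limiting connection structure to be measurable with respect to the continuum Brownian loop-soup alone, pinning the limit to the Brownian loop-soup clusters. The key heuristic input, flagged in the remark following \eqref{twopoint}, is that in $d=3$ the quantities $P[0\leftrightarrow x]$ and $E[|\phi(0)||\phi(x)|1_{0\leftrightarrow x}]$ are comparable as $x\to\infty$, meaning the conditional law of $|\phi(0)|$ stays tight---so conditioning on a long-range connection does not summon a divergent cloud of small loops through a point. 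Making that tightness quantitative and uniform in $\delta$ is precisely the mechanism that should exclude a Case-2 gluing.

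The main obstacle---and the reason the statement is a conjecture rather than a theorem, as the authors explicitly concede that ``a proof of this fact appears to remain surprisingly elusive''---is that in $d=3$ one has no substitute for the conformal-invariance and SLE/CLE input that made the two-dimensional case tractable. Concretely, the crux is controlling the cumulative effect of the microscopic loops: one must show that the cable-graph clusters do \emph{not} get glued together by chains of sub-macroscopic loops in a way that survives the $\delta\to 0$ limit. This is a delicate \emph{near-critical} percolation question with long-range, power-law correlations given by \eqref{twopoint}, and there is no comparison to an independent percolation model (unlike the $d>6$ regime treated later in the paper). I would expect the hardest single step to be an \emph{a priori} uniform-in-$\delta$ bound ruling out a logarithmically (or polynomially) divergent number of intermediate-scale loops conspiring to connect two macroscopically separated macroscopic loops---an estimate for which the two-point function alone is insufficient and for which a genuine multi-scale or renormalization analysis, tracking connection probabilities across a hierarchy of scales $\delta^b$, would likely be required. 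Absent such a tool, the identification of the limit (part (a)) remains open even though the dimension bounds (part (b)) are within reach by the energy method sketched above.
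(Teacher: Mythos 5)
First, a framing point: the statement you were asked about is a \emph{conjecture}; the paper does not prove it, and you were right to present your answer as a proof proposal rather than a proof. What the paper does supply is the pair of partial bounds $2 < \Delta \le 5/2$ (the lower bound via double points of three-dimensional Brownian motion, which force each loop to intersect infinitely many others; the upper bound via a first-moment computation from Corollary~\ref{prop1}), together with an explicit statement of the obstruction: excluding the scenario of infinitely many disjoint, dense, ``very skinny'' cable-graph clusters in the $\delta \to 0$ limit. So there is no paper proof to compare against, and the only question is whether your proposed programme is sound.

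Two of your claims have genuine gaps. First, the assertion that the lower bound $\Delta \ge 5/2$ is ``within reach by the energy method'' using Corollary~\ref{prop1} is not tenable: the two-point function alone does not determine the cluster dimension. The paper itself furnishes a counterexample to that logic: for $d>6$ the very same two-point asymptotics \eqref{twopoint} hold (Corollary~\ref{prop1} is dimension-independent), yet Proposition~\ref{firstprop} shows that no cluster carries more than $c_0 N^4 \log N$ points --- mass dimension $4$, not $1+d/2$. Passing from the two-point exponent to the dimension $1+d/2$ is a hyperscaling relation, and proving it requires exactly the conditional second-moment / one-arm estimates that are missing in $d=3$; moreover, the natural tool for such second moments, the Aizenman--Newman tree-graph expansion adapted to loops, breaks down below $d=6$, because the loop sum $\sum_m m^{2-d/2}$ (which the paper isolates as the reason the $d>6$ argument closes) diverges there. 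Second, your identification step for part (a) proves too much: it invokes only the convergence of connection probabilities / Green's functions and of the cable-graph GFF, and these hold equally in $d=4$ and $d=5$, where the paper conjectures that the limiting clusters are \emph{strictly larger} than the Brownian loop-soup clusters (Cases 2b and 2a). Convergence of correlation functions therefore cannot by itself pin the limit to Case 1; in $d=2$ the identification rested on exact CLE$_4$ cluster-law formulas matching Le Jan's computations, for which there is no three-dimensional substitute. Relatedly, you treat tightness/non-proliferation as the easy preliminary step, whereas the paper flags precisely this --- ruling out the dense-skinny-cluster (Case 0) scenario --- as the surprisingly elusive point.
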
 

One difficulty in proving this conjecture is to be able to exclude the somewhat absurd-looking scenario that in the limit $\delta \to 0$, there  might exist infinitely many disjoint dense (and ``very skinny'') cable-graph loop-soup clusters.

\subsubsection {A further open question} 

When $d=2$, it is known that the obtained loop-soup clusters are in fact a deterministic 
function of the continuum GFF (based on the fact that their boundaries are level lines of this GFF in the sense of \cite {MS1}), so that this ``excursion decomposition'' of the GFF is indeed unique (see \cite {ALS1,ALS2}). 

Let us also recall that when $d=2$ and $d=3$, it is possible to define the (renormalized) square of the continuum GFF (or equivalently, the renormalized total occupation time measure of the loop-soup), see for instance \cite {QW} and the references therein.
Let us now mention a related open question (also to illustrate that some questions remain also in the two-dimensional case).
\begin {question} 
In dimension $d=2$ and $d=3$: Are the (scaling limits of the) loop-soup clusters a deterministic function of this (renormalized) square of the continuum GFF? If not, what is the missing randomness? 

In dimension $d=3$: Are the (scaling limits of the) loop-soup clusters a deterministic function of the continuum GFF? 
\end {question}

\subsection {Intermediate dimensions} 

\subsubsection {Some a priori estimates} 

Again, the cable-graph loop-soup clusters do not proliferate in the $\delta \to 0$ limit, then it is to be expected, based on estimates such as Corollary~\ref {prop1} that the dimension of the scaling limits would be $\Delta=1 + (d/2)$. In particular, when $d = 4$ and $d=5$, if one adds another independent macroscopic Brownian loop to an existing loop-soup, this additional loop will almost surely intersect infinitely many of these limits of cable-graph clusters. From this, it is easy to deduce that a limit of cable-graph clusters would actually contain infinitely many Brownian loops. 

Recall however that a Brownian loop is almost surely a simple loop and that almost surely, any two loops in the loop-soup will be disjoint, so that Brownian loop-soup clusters will all consist of just one loop each (and therefore have Hausdorff dimension equal to $2$). 

Finally, self-similarity of the construction suggests that 
Brownian loops will be part of the scaling limit of the cable graph loop-soups at every scale, and that if one removes all Brownian loops of size greater than $a$ say, then as $a \to 0$, the size of the largest limiting cluster will also vanish. In other words, the ``macroscopic'' loops are instrumental in the construction of the Brownian loop-soup clusters. 

Let us summarize part of this in terms of a concrete conjecture. 
\begin {conj} 
When $d=4$ and $d=5$, the limit in distribution of the cable-graph clusters in $(0,1)^d \cap \delta \Z^d$ does exist, and it is supported on families of clusters of fractal dimension $1 + (d/2)$ with the property that for all small $a$, the number of clusters of diameter greater than $a$ is finite.     
\end {conj}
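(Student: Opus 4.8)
\emph{Proof proposal.} The plan is to split the statement into three tasks: (i) \emph{tightness} of the number of macroscopic cable-graph clusters as $\dd\to 0$; (ii) \emph{existence and uniqueness} of the limiting law given tightness; and (iii) identification of the \emph{fractal dimension} as $1+d/2$. Throughout I would work inside the coupling of Proposition \ref{prop0}, so that every cluster event is simultaneously an excursion-set event of the cable-graph GFF, and I would use Corollary \ref{prop1} together with the scaling of the Green's function on $\dd\Z^d$ as the quantitative engine: for $x,y$ at a fixed $\R^d$-distance $r$ one has $G(x,x)=O(1)$ while $G(x,y)\asymp(\dd/r)^{d-2}$, so the macroscopic two-point function is of order $(\dd/r)^{d-2}$, consistently with \eqref{twopoint}. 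Tightness will immediately provide subsequential limits, so the real content lies in (i) and (iii).

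For the tightness step I would bound the first moment of the number $N_a$ of clusters of diameter at least $a$. The key structural claim --- and the feature that I expect separates $4\le d\le 5$ from the regime $d>6$ --- is that in these dimensions every macroscopic cluster is \emph{anchored} by at least one genuinely macroscopic Brownian loop, i.e. microscopic loops alone cannot chain across a macroscopic distance with non-negligible probability. I would make this precise by estimating, through Corollary \ref{prop1} and a van den Berg--Kesten or tree-graph-type inequality adapted to the loop soup, the probability that two points at macroscopic distance are joined by a chain that uses only loops of diameter below a cut-off $a'$, and by showing this decays fast enough that it cannot create a positive density of such chains when $d<6$. Since the number of Brownian loops of diameter greater than $a'$ is almost surely finite, anchoring would then bound $E[N_a]$ by a finite quantity uniformly in $\dd$, yielding tightness and ruling out the proliferation scenario (Case $0$).

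For the dimension I would treat the two bounds separately, keeping in mind a cautionary point: the two-point function of Corollary \ref{prop1} controls only the \emph{expected} cluster size, $\sum_y P[x_0\leftrightarrow y]$ over a macroscopic ball, and a direct computation using \eqref{twopoint} shows that this quantity scales with the exponent $2$ in \emph{every} dimension $d\ge 2$ --- it is dominated by the single macroscopic loop through $x_0$ and so, on its own, does not detect the fractal dimension $1+d/2$. Hence the genuine input is the \emph{one-arm} behaviour. For the upper bound $\Delta\le 1+d/2$ I would cover $(0,1)^d$ by boxes of side $\eps$ and estimate the point-to-box connection probability $P[x_0\leftrightarrow B]$, whose decay in $\eps$ is governed by the one-arm exponent $d/2-1$; a first-moment covering bound then gives an expected number of boxes met of order $\eps^{-(1+d/2)}$, hence the upper bound. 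For the matching lower bound I would place on each cluster a natural Frostman measure in the spirit of the measures $\mu_j$ of \cite{ALS1,ALS2} and bound its $s$-energy through the multi-point connection probabilities, which are again explicit functions of the Green's function through the Gaussian structure; seeded by the elementary loop-intersection geometry that already forces $\Delta>2$ (exactly as in the three-dimensional discussion), this should pin the dimension from below. Uniqueness of the limit would then follow by identifying subsequential limits through their finite-dimensional connection probabilities, which converge by Corollary \ref{prop1} and the convergence of the discrete Green's function, once the glueing map is characterised --- deterministic for $d=4$ (Case $2b$) and carrying extra randomness for $d=5$ (Case $2a$).

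The main obstacle is that both the lower bound on the dimension and the anchoring estimate rest on a form of \emph{hyperscaling} below the upper critical dimension $d_c=6$: as noted above, Corollary \ref{prop1} pins an expected-size exponent equal to $2$ in all dimensions and therefore, by itself, sees neither the fractal dimension $1+d/2$ nor the one-arm exponent $d/2-1$, so one genuinely needs independent control of the one-arm probability and of how microscopic loops agglomerate the macroscopic ones. This is precisely the mechanism that distinguishes Cases $1$, $2a$, $2b$ and $0$, and for which neither the lace-expansion methods available when $d>6$ nor the conformal and CLE methods available when $d=2$ apply. The analogy with near-critical percolation inside a Conformal Loop Ensemble that motivates the value $1+d/2$, and the predictions of Case $2b$ for $d=4$ and Case $2a$ for $d=5$, remains at present only a heuristic, which is why the statement is formulated as a conjecture rather than proved.
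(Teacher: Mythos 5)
The first thing to say is that the statement you were given is one of the paper's \emph{conjectures}: the paper contains no proof of it, and states it as a conjecture precisely because a proof currently seems out of reach. What the paper does offer is heuristic support, and your proposal reconstructs essentially those same heuristics: the value $\Delta = 1+(d/2)$ is obtained by combining the two-point asymptotics of Corollary \ref{prop1} and (\ref{twopoint}) with the \emph{assumed} non-proliferation of macroscopic clusters (your tightness/anchoring step; the paper's phrasing is that the cable-graph clusters ``do not proliferate'' in the $\delta\to 0$ limit); the special role of the threshold $d<6$ matches the paper's observation that an independent macroscopic Brownian loop (dimension $2$) almost surely hits sets of dimension $1+(d/2)$ exactly when $3+d/2>d$; and your closing remarks about a deterministic glueing mechanism for $d=4$ versus genuinely random glueing for $d=5$ are the content of the paper's subsequent conjecture, motivated there by the CLE-percolation analogy with \cite{MSW1,MSW2}. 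Since you end by conceding that the key pillars are unproven, your proposal is not a proof --- but neither is anything in the paper; on that score you have correctly diagnosed the status of the statement, and your account of why it resists proof (no lace expansion below $d=6$, no conformal tools above $d=2$) agrees with the paper's.

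Two of the steps you present as executable are, however, weaker than you suggest. First, for the Frostman/energy lower bound and for identifying subsequential limits you invoke ``multi-point connection probabilities, which are again explicit functions of the Green's function through the Gaussian structure.'' This is not so. The coupling of Proposition \ref{prop0} expresses \emph{sign correlations} in terms of connectivities: for four points, $E[\sgn \phi(x_1)\cdots \sgn\phi(x_4)]$ equals the sum, over the three ways of pairing the points, of the probability that each pair is connected while the two pairs lie in distinct clusters, plus the probability that all four points lie in a single cluster. Such relations determine only these linear combinations of cluster-partition probabilities, not the individual three- or four-point connectivities that an energy estimate or a uniqueness argument needs (and orthant probabilities of Gaussian vectors admit no elementary closed form in general); this is precisely why, in the regime $d>6$ where the paper \emph{does} prove things, it resorts to BK-type and tree-graph inequalities rather than exact formulas. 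Second, your anchoring claim is not a lemma awaiting verification but the crux of the whole problem: it is exactly the assertion that Case 0 fails below $d=6$, and the paper stresses that the analogous difficulty --- excluding infinitely many disjoint, dense, ``very skinny'' clusters --- is open even in $d=3$, where one additionally has the renormalized square of the GFF available. In short, your programme is a faithful and somewhat more detailed rendering of the paper's heuristics, but it should not be mistaken for a reduction of the conjecture to technical steps.
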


The main additional heuristic question that we will now discuss is whether the disjoint Brownian loops in the loop-soup get agglomerated into these scaling limit of cable-graph clusters in a deterministic manner or not (i.e., are the scaling limit of the cable-graph clusters a deterministic function the collection of Brownian loops or not?). 

\subsubsection {Background and analogy with CLE percolation} 
It is worthwhile to draw an analogy with one aspect of the papers \cite {MSW1,MSW2} about the existence of a non-trivial ``critical percolation'' model in a random fractal domain. 
Here, one should forget that CLE$_\kappa$ for $\kappa \in (8/3, 4]$ is related to loop-soups or to the GFF, 
and one should view it as an example of random fractal ``carpet'' in the square $[0,1]^2$. The CLE$_\kappa$ carpet $K_\kappa$ in $[0,1]^2$ is obtained by 
removing from this square a countable collection of simply connected sets, that are all at positive distance from each other. 
It can be therefore be thought as a conformal randomized version of the Sierpinski carpet. The following features are relevant here: 

- The larger $\kappa$ is, the smaller the CLE$_\kappa$ tends to be. It is actually possible (this follows immediately from the CLE construction via loop-soups in \cite {SW}) to couple them in a decreasing way i.e., $K_\kappa \subset K_{\kappa'}$ when $8/3 < \kappa' \le \kappa \le 4$.   

- There is one essential difference between CLE$_\kappa$ for $\kappa < 4$ and CLE$_4$: When $\kappa < 4$, there exists a positive $u(\kappa)$ such that for all $a>0$, the probability that there exists two 
holes in $K_\kappa$ that have diameter greater than $a$ and are at distance less than $\eps$ from each other does decay (at least) as $\eps^{u + o(1)}$ as $\eps \to 0$.  
This property fails to hold for CLE$_4$ (this probability will decay logarithmically) which intuitively means that exceptional bottlenecks are more likely in that CLE$_4$ case.

One of the results of \cite {MSW1} is the construction of a process that can be interpreted as a critical percolation process within the random set $K_\kappa$. One can view this either as defining a collection of clusters that live within $K_\kappa$, or if one looks at 
the dual picture, as a collection of clusters that ``glue'' the different CLE loops together (in the original percolation picture, the loops and their interior are ``closed'' and in the dual one, they are now ``open''). In this dual picture, this does therefore 
construct a natural way to randomly regroup these holes (or their outer boundaries, that are SLE-type loops) into clusters.  

One of the results of \cite {MSW2} is that this percolation/clustering procedure is indeed random (i.e., the obtained clusters are not a deterministic function of the CLE$_\kappa$) as long a $\kappa < 4$. 
On the other hand, it is shown in \cite {MSW1} that no non-trivial clustering mechanism can work for CLE$_4$.

\subsubsection {Conjectures} 
The complement of a Brownian loop-soup in $(0,1)^d$ for $d \ge 4$ has some similarities with the previous CLE$_\kappa$ case. It is the complement of a random collection of disjoint simple loops, with a fractal structure. When $d \ge 5$, the ``space'' in-between the loops is much larger than in the $4$-dimensional case, in the sense that the probability 
that two macroscopic loops are $\eps$-close decays like a power of $\eps$, while it only decays in a logarithmic fashion in $4$ dimensions. 
Further analogies can also be made, that lead to: 

\begin {conj}
When $d = 5$, we conjecture that ``critical percolation'' in the space defined by ``contracting all the loops in a loop-soup'' (or equivalently, percolation that tries to glue together the loops in a loop-soup) should exist and be non-trivial. In other words, by observing the Brownian loop-soup only, one does not know which Brownian loops do belong to the same clusters.

When $d = 4$, we conjecture that the glueing mechanism  is deterministic. In other words, by observing the Brownian loop-soup only, one knows which Brownian loops do belong to the same clusters. 
\end {conj}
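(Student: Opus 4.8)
The plan is to reduce both halves of the statement to a single quantitative estimate on the proximity of pairs of independent macroscopic Brownian loops, and then to transpose the Miller--Sheffield--Watson dichotomy for CLE percolation \cite{MSW1,MSW2} into this $d$-dimensional setting. Concretely, I would first isolate the analogue of the ``two $\eps$-close holes'' event: for two independent Brownian loops $\gamma,\gamma'$ in $(0,1)^d$, each of diameter at least some fixed $a>0$, I want the behaviour as $\eps\to 0$ of $P[\operatorname{dist}(\gamma,\gamma')\le \eps]$. Since a Brownian path has Hausdorff dimension $2$ and the critical dimension for intersection of two independent Brownian motions is $d=4$, this quantity is controlled by the Brownian (non-)intersection exponents, and the expected result is the dichotomy
\[
P[\operatorname{dist}(\gamma,\gamma')\le \eps]\ \asymp\ \eps^{\,d-4}\ \ (d\ge 5), \qquad P[\operatorname{dist}(\gamma,\gamma')\le \eps]\ \asymp\ \frac{1}{\log(1/\eps)}\ \ (d=4).
\]
This is exactly the power-law-versus-logarithmic split that distinguishes CLE$_\kappa$ for $\kappa<4$ from CLE$_4$ in the carpet picture, and it is what makes the analogy of the previous subsection quantitative.

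For $d=5$ the exponent $d-4=1$ is strictly positive, so at each dyadic scale only finitely many pairs of macroscopic loops come within the corresponding distance, and the associated ``bottlenecks'' are sparse. I would then build the gluing as a percolation/exploration process whose elementary decisions live at these sparse near-encounters: at each place where two clusters of loops come close, an independent fair coin (the microscopic analogue of the $\eps_j$ flips of Proposition~\ref{prop0}, now surviving the scaling limit) decides whether the passage is open. Following the construction of \cite{MSW1}, one checks that the resulting clusters are non-degenerate and have the predicted dimension $1+(d/2)=7/2$. The non-determinism --- that the clusters are genuinely not a measurable function of the Brownian loop-soup --- would then be proved as in \cite{MSW2}, via a second-moment/decorrelation estimate showing that the conditional probability of a macroscopic connection, given only the macroscopic loops, stays bounded away from $0$ and $1$.

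For $d=4$ the proximity probability decays only logarithmically, so near-encounters do not become sparse: between any two loops that are candidates to be glued there are, at every scale, infinitely many independent opportunities to connect through chains of ever-smaller loops. The heuristic is then that this abundance washes out the conditional randomness. Concretely, I would fix two macroscopic loops together with the event that they lie in the same cable-graph cluster, and show by a Borel--Cantelli / second-moment argument (exploiting that the expected number of connecting microscopic chains diverges, with controlled variance) that the conditional probability of connection given the macroscopic loop-soup is almost surely $0$ or $1$. A $0$--$1$ law of this form makes the whole partition into clusters a deterministic measurable function of the loop-soup, which is the claim. This is the positive counterpart of the \cite{MSW1} result that no non-trivial clustering mechanism exists for CLE$_4$.

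The main obstacle is twofold. First, the very object whose gluing is being described --- the scaling limit of the cable-graph clusters --- is itself only known to exist conjecturally (it is the content of the preceding Conjecture), so the argument must either be run at the discrete cable-graph level with uniform-in-$\delta$ estimates, or be predicated on that existence. Second, and more seriously, all connection information at the cable-graph level is carried by the cumulative occupation field $\Gamma$ and by the swarm of microscopic loops of size $\delta^b$, which disappear in the naive loop-soup limit of observation~(i); the crux is therefore to show that their net effect on macroscopic connectivity is captured precisely by the proximity estimate above --- and then, in the $d=4$ case, that the bridging decision is measurable with respect to the macroscopic loops, while in the $d=5$ case that it provably is not. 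Transferring the occupation-field correlations of Corollary~\ref{prop1} and equation~\eqref{twopoint} into such a statement about chains of microscopic loops is where I expect the real work to lie.
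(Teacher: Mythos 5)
This statement is one of the paper's conjectures: it is not proved there, and it is not provable by currently known methods. The paper offers only a heuristic justification, and your proposal reproduces that heuristic essentially verbatim: the motivating observation in the paper is precisely your dichotomy --- the probability that two macroscopic loops come within distance $\eps$ of one another decays like a power of $\eps$ when $d\ge 5$ but only logarithmically when $d=4$ --- followed by the same analogy with CLE percolation (non-trivial and genuinely random clustering for CLE$_\kappa$, $\kappa<4$, by \cite{MSW1,MSW2}; no non-trivial clustering mechanism for CLE$_4$, by \cite{MSW1}). So as a reading of the intended picture, your plan is faithful to the paper's.

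But it is a plan, not a proof, and the two obstacles you name at the end are not technical residue --- they are the entire content of the open problem. First, the object whose measurability with respect to the loop-soup is at stake (the scaling limit of the cable-graph clusters) is itself only conjectured to exist, in the conjecture immediately preceding this one; so neither your $0$--$1$ law for $d=4$ nor your decorrelation estimate for $d=5$ has a well-defined object to apply to, and carrying out the alternative (uniform-in-$\delta$ estimates at the discrete level) is exactly what nobody knows how to do. Second, the constructions of \cite{MSW1} and the non-determination argument of \cite{MSW2} are intrinsically two-dimensional: they are built from SLE/CLE explorations and conformal invariance, so ``following the construction of \cite{MSW1}'' and ``as in \cite{MSW2}'' are placeholders rather than arguments in $d=4,5$. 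Two smaller but genuine inaccuracies: for $d=5$ the conjectured extra randomness cannot simply be independent fair coins attached to pre-identified sparse bottlenecks --- in \cite{MSW1} the analogous randomness arises as the limit of a percolation exploration process, not as coin flips at pinch points --- and the $\eps_j$ of Proposition~\ref{prop0} are not a microscopic source for it: those coins assign signs to already-formed clusters and carry no connectivity information, so they cannot ``survive the scaling limit'' as gluing decisions. In short, your heuristic matches the paper's, but nothing in the proposal closes, or could at present close, the conjecture.
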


Let us finally conclude with the same question as for $d=3$: 
\begin {question} 
When $d=4, 5$: In the scaling limit (taking the joint limit of the cable-graph clusters and of the GFF), are the limits of the cable-graph clusters determined by the limiting GFF? 
\end {question}

\section {High dimensions ($d > 6$)} 

\subsection {General features}
As opposed to the cases $d=3,4,5$ where most features are conjectural, it is possible to derive a number of facts when the dimension of the ambient space becomes large enough (note that we will not discuss the somewhat complex case $d=6$ here). 
As opposed to the lower-dimensional cases, these results do not say anything about geometric structures within the  continuum GFF, but they provide insight into the asymptotic behaviour of the cable-graph loop-soup clusters in $\Z^d$ (or in large boxes in $\Z^d$). Actually, when the dimension of the space is large enough, we expect (see \cite {Werner}) that the Brownian loop-soup in $\R^d$ (appearing as the scaling limit of the cable-graph loop-soup) and the GFF (appearing as the limit of cable-graph GFF constructed using the cable-graph loop-soup) become asymptotically independent. 

It is worth first recalling some of the results about usual (finite-range) critical percolation in high dimensions (see \cite {HS, H, HS2, HS3, Aizenman, HvdHS, FvdH, HvdH} and the references therein). 
A landmark result in the study of those models is that when $d$ is large enough, the ``two-point function'' 
(i.e., the probability that two points $x$ and $y$ belong to the same cluster)
behaves (up to a multiplicative constant) like $1/ \|x-y\|^{d-2}$ as $\| x-y\| \to \infty$.
This is known to hold for (sufficiently) spread-out percolation in $\Z^d$ for $d > 6$, and in the case of usual nearest-neighbour percolation for $d \ge 11$. The existing proofs are based on the lace-expansion techniques (that have also been successfully applied to other models than percolation) as developed in this context by Hara and Slade \cite {HS,H,HS2,HS3}). 
This estimate is then the key to the following subsequent statements that we describe in rather loose terms here (see Aizenman \cite {Aizenman}):
 If one considers a finite-range percolation model restricted to $[-N,N]^d$, for which the two-point function estimates is shown to hold, then as $N \to \infty$: 
 
- Clusters with large diameter (say, greater than $N/2$) will proliferate as $N \to \infty$ -- their number will be greater than  $N^{d-6 + o(1)}$ with high probability. 
 
- With high probability, no cluster will have more than $N^{4 + o(1)}$ points in it. 

Note also that the geometry of large clusters can be related to superbrownian excursions. 

\medbreak
As we shall explain now (and plan to discuss in more detail in \cite {Werner}), similar results hold true for the loop-soup clusters in the cable-graph of $\Z^d$ when $d > 6$. The general feature is that the behaviour of the two-point function in this case is given for free by Corollary \ref {prop1}, so that the difficult lace-expansion ideas are not needed here. 
One just has to adjust ideas such as developed by Aizenman in  \cite {Aizenman} on how to extract further information from the estimate on the two-point function. 

\subsection {Some results}
Let us now explain how to adapt some arguments of \cite {Aizenman} to the case of loop-soup percolation. It is convenient to work in the following setting: 
We define $\Lambda_{N}$ to be the set of integer lattice points in $[-N,N]^d$, and $\Lambda_{N,c}$ the cable graph associated to it. 
We will consider the cable-graph loop-soup on $\Lambda_{N,c}$ and study its clusters and connectivity properties. We denote by $n_0$ the number of cable-graph clusters that contain at least one point of $\Lambda_N$, and we order them using some deterministic rule as $C_1, \ldots, C_{n_0}$. We denote by $|C|$ the number of points of $\Lambda_N$ that lie in a set $C$, and when $x \in \Lambda_{N}$, we call $C(x)$ the cluster that contains $x$.
In the sequel, $x \leftrightarrow y$ will always denote the event that $x$ and $y$ are connected via the cable-graph loop-soup in $\Lambda_{N,c}$ (the dependency on $N$ will always be implicit). 
Note that for all $k \ge 1$,
$$ E [ |  C(x) |^k ] = \sum_{ y_1, \ldots, y_k \in \Lambda_N} P [ x \leftrightarrow y_1, \ldots, x \leftrightarrow y_k ].$$
and also that 
$$  E [\sum_{n \le n_0} |C_n|^{k+1} ] =  \sum_{x \in \Lambda_N} E [ | C(x)|^{k}| ].$$ 
Corollary \ref {prop1} then implies (using simple bounds on the Green's function in a box) immediately that there exist constants 
$v_1, v_2$ such  that for all sufficiently large $N$, 
$$ v_1 N^{2} \le \min_{x \in \Lambda_{N/2}} E [ | C(x) | ]  \le   \max_{x \in \Lambda_{N/2}} E [ | C(x) | ]\le  \max_{x \in \Lambda_N} E [ | C(x) | ] \le v_2 N^{2}$$ 
and then summing over $x$ in $\Lambda_N$ and in $\Lambda_{N/2}$, one gets the existence of $v_3, v_4$ such that for all large $N$,
$$  v_3 N^{d+2} \le E [ \sum_{n \le n_0} | C_n|^2 ] \le v_4  N^{d+2}. $$ 
 
Let us first show the following analogue of (4.10) in \cite {Aizenman}: 
\begin {proposition}
\label {firstprop}
For some fixed large $c_0$, with probability going to $1$ as $N \to \infty$, no loop-soup cluster (in $\Lambda_N$) contains more than $c_0 N^4 \log N$ points.
\end {proposition}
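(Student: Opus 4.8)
The plan is to control the high integer moments of the cluster sizes and then apply Markov's inequality to $\sum_{n \le n_0} |C_n|^{k+1}$ for a well-chosen exponent $k = k(N)$ growing like $\log N$. Throughout I write $\tau(a,b) = P[a \leftrightarrow b]$ and $\chi = \max_{x \in \Lambda_N} \sum_{y \in \Lambda_N} \tau(x,y) = \max_{x \in \Lambda_N} E[|C(x)|]$, so that the a priori estimates recalled just above give $\chi \le v_2 N^2$. The key input will be a tree-graph (van den Berg--Kesten type) bound for the multi-point connectivities of the cable-graph loop-soup, of the form
\[
P\Big[\bigcap_{i=1}^k \{x \leftrightarrow y_i\}\Big] \le \sum_{t \in \mathcal T_k} \ \sum_{\text{internal vertices}} \ \prod_{\text{edges of } t} \tau ,
\]
where $\mathcal T_k$ ranges over the tree topologies with root $x$, leaves $y_1, \dots, y_k$ and internal branch vertices of degree at least $3$ (whose positions in $\Lambda_N$ are summed over).

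Granting this, I would sum over $y_1, \dots, y_k \in \Lambda_N$ and integrate out the free vertices from the leaves inwards; each summation produces a factor at most $\chi$. Since a topology with $k$ leaves has at most $k-1$ internal vertices, hence at most $2k-1$ edges, and $\chi \ge 1$ for large $N$, this gives
\[
E[|C(x)|^k] \le c_k\, \chi^{2k-1}, \qquad c_k := |\mathcal T_k| \le C^k k! ,
\]
the bound on $c_k$ reflecting that the number of binary tree shapes is of order $(2k-3)!! \le C^k k!$. Combining this with the identity $E[\sum_{n\le n_0}|C_n|^{k+1}] = \sum_{x \in \Lambda_N} E[|C(x)|^k]$ stated above yields, for constants $K = K(d)$ and $\tilde C = C v_2^2$,
\[
E\Big[\sum_{n \le n_0} |C_n|^{k+1}\Big] \le |\Lambda_N|\, c_k\, \chi^{2k-1} \le K\, \tilde C^{\,k}\, k!\, N^{d+4k-2} .
\]

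Since $\max_n |C_n|^{k+1} \le \sum_n |C_n|^{k+1}$, Markov's inequality with $M = c_0 N^4 \log N$ gives
\[
P\big[\max_n |C_n| > M\big] \le \frac{E[\sum_n |C_n|^{k+1}]}{M^{k+1}} \le \frac{K}{c_0 \log N}\, N^{d-6} \Big(\frac{\tilde C}{c_0 \log N}\Big)^{k} k! .
\]
Taking $k = \lceil (c_0/\tilde C) \log N \rceil$ and using $\log k! = k\log k - k + O(\log k)$, the last two factors contribute $\exp\big(-(c_0/\tilde C + o(1))\log N\big)$, so the right-hand side is at most $N^{d-6-c_0/\tilde C + o(1)}$; choosing $c_0 > \tilde C (d-6)$ makes it tend to $0$. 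This is precisely where the logarithmic correction in $c_0 N^4 \log N$ is used: it is what lets the factorially large tree-counting constant $k!$ be beaten once $k$ is taken of order $\log N$.

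The step I expect to be the main obstacle is the justification of the tree-graph inequality itself in the loop-soup setting. For Bernoulli percolation this is the Aizenman--Newman inequality, obtained by iterating the BK inequality; here the connections are produced by a Poissonian soup of loops rather than by independent edges, so one must instead invoke a disjoint-occurrence (BKR-type) inequality for Poisson point processes applied to the loops, together with the observation that the events $\{x \leftrightarrow y_i\}$ are increasing in the loop configuration and that the chains of loops realizing the connections can be split at common branch points into essentially disjointly supported pieces. Making this decomposition precise on the cable graph, so that the resulting bound has exactly the product-over-edges form above with the two-point function of Corollary~\ref{prop1} on each edge, is the one genuinely model-specific point; everything after it is the soft moment-and-Markov argument carried out above, in the spirit of Aizenman's treatment in \cite{Aizenman}.
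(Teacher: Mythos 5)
Your overall skeleton---bound $E[|C(x)|^k]$ by a tree-graph expansion, sum over $x$ to control $E[\sum_n |C_n|^{k+1}]$, then apply Markov with $k$ of order $\log N$---is exactly the paper's strategy, and your computation downstream of the tree bound (the $k!\,C^k N^{4k-2}$ moment estimate, the $N^{d-6}$ factor, beating $k!$ by taking $c_0$ large) reproduces the paper's ending essentially verbatim. The problem is that everything you do is conditional on the displayed tree-graph inequality, which you introduce with ``Granting this'' and only return to at the end as an expected obstacle. That inequality, in the point-internal-vertex form you state, is not a routine BKR/disjoint-occurrence consequence in this model, and proving it is not a peripheral technicality: it is the entire model-specific content of the paper's proof, and it is the \emph{only} place where the hypothesis $d>6$ is actually consumed.

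Concretely, when $x\leftrightarrow y_1$ and $x\leftrightarrow y_2$ occur in the loop soup, the branching of the connection tree happens \emph{along a loop} $\gamma$, not at a lattice point: one finds points $x_0,x_1,x_2$ each within distance $1$ of $\gamma$ such that $x\leftrightarrow x_0$, $y_1\leftrightarrow x_1$, $y_2\leftrightarrow x_2$ occur disjointly, and $\gamma$ can be arbitrarily large, so $x_0,x_1,x_2$ need not be close to one another. To collapse this loop-node into a point vertex with two-point functions on each edge, the paper sums over the discrete trace $l$ of $\gamma$: the expected number of loops of length $m$ through a given point is of order $m^{-d/2}$, while the number of choices for the attachment points $x_1,x_2$ on such a loop is of order $m^2$, so the reduction costs a factor $\sum_m m^{2-d/2}$, which converges precisely when $d>6$. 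Your sketch (``split the chains of loops at common branch points into disjointly supported pieces'') never confronts this mass-versus-entropy tradeoff over the size of the branch loop, and indeed your write-up never uses $d>6$ anywhere in deriving the moment bound---the dimension only appears in the final Markov step through the harmless factor $N^{d-6}$, which large $c_0$ absorbs regardless of its sign. That is the signature of the gap: an argument for this proposition in which $d>6$ plays no essential role before the last display cannot be complete, since the claimed constant $C^k$ per tree topology is finite only because of the convergence of the loop-size sum that $d>6$ guarantees.
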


\begin {proof}
This is based on the fact that the Aizenman-Newman diagrammatic procedure \cite {AN} used in \cite {Aizenman} to bound the moments of $|C(x)|$ can be adapted to this loop-soup percolation setting. 
Let us first explain this in some detail
the case of the second moment. 
As mentioned above, one has
$$ E [ |C(x)|^2 ] = \sum_{y_1, y_2 \in \Lambda_N} P [ x \leftrightarrow y_1, x \leftrightarrow y_2 ].$$
When $x \leftrightarrow y_1, x \leftrightarrow y_2$ both occur, then it means that for some loop $\gamma$ in the cable-system loop-soup the events $\gamma \leftrightarrow x$, $\gamma \leftrightarrow y_1$ and $\gamma \leftrightarrow y_2$ occur disjointly (i.e., using disjoint sets of loops -- the loops may overlap, but each event is realized using different loops); we call ${\mathcal T}$ this event. [To see this, one can first choose a ``minimal'' chain of loops that join $x$ to $y_1$ (this means that one can not remove any  these loops from the chain without disconnecting $x$ to $y_1$) and then use a second ``minimal'' chain of loops that join $y_2$ to this first chain. The loop $\gamma$ will be the loop of the first chain that this second chain joins $y_2$ to.] 

In particular, it means that for at least one loop $\gamma$ in the cable-system loop-soup, one can find integer points $x_0$, $x_1$ and $x_2$ in $\Lambda_N$ that are at distance at most $1$ from $\gamma$ such that $x \leftrightarrow x_0$, $y_1 \leftrightarrow x_1$ and $y_2 \leftrightarrow x_2$ occur disjointly. We are going to treat differently the case where $\gamma$ visits at least two points of $\Z^d$ from the case where it visits less than two points. 

Let us introduce some notation and make some further preliminary comments: 
For each cable-system loop $\gamma$ that visits at least two integer points, one can look at its trace on $\Z^d$ that we denote by $l(\gamma)$, which is a discrete loop in $\Lambda_N$. Note that the collection ${L}$ of all $l(\gamma)$'s for $\gamma$ in the loop-soup ${\mathcal L}$ is a discrete random walk loop-soup in $\Lambda_N$, and that when an integer point is at distance at most $1$ from $\gamma$, it is also at distance at most $1$ from $l(\gamma)$.  If $|l| \ge 2$ denotes the number 
of steps of the discrete loop $l(\gamma)$, there are therefore at most $|l| \times (2d+1)$ possibilities for each of $x_0$, $x_1$ and $x_2$.

\begin{figure}[ht!]
\includegraphics[scale=.65]{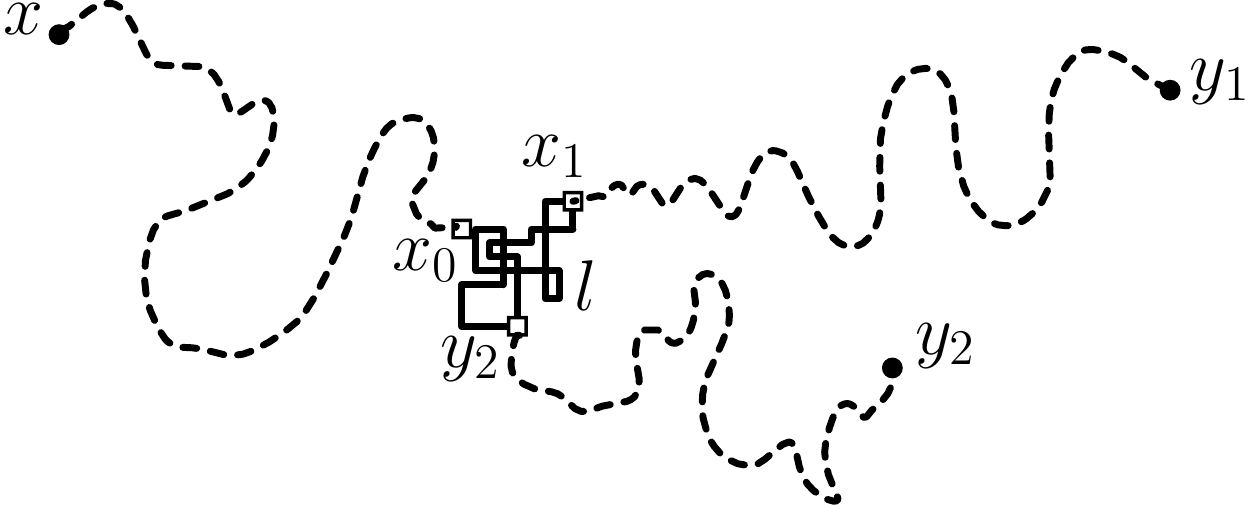} 
	\caption{Compared to the ``usual'' Aizenman-Newman tree expansion, one has also to sum over the loops that play the role of nodes of the tree, but this additional sum converges}
	\label{pic}
\end{figure}

For each given $x$, $y_1$ and $y_2$, we can now use the BK inequality to bound 
$P [ x \leftrightarrow y_1, x \leftrightarrow y_2 ]$ by the sum of the contributions described in (a) and (b) below: 

(a) The sum over all $x_0$, $x_1$ and $x_2$ that are all at distance at least $2$ from each other of the product 
$$P[ x_0 \leftrightarrow x ] P [ x_1 \leftrightarrow y_1 ] P [ x_2 \leftrightarrow y_2].$$ 
This sum corresponds to the contribution to  the event ${\mathcal T}$ of the cases where $\gamma$ visits at most one point of ${\Z}^d$. Note that for a given $x_0$, there are at most $(2d+1)^2$ choices (corresponding to the two steps or less needed to go from $x_0$ to $x_1$) for $x_1$ and $(2d+1)^2$ choices for $x_2$. 

(b) The sum over all discrete loops $l$ with $|l| \ge 2$ steps, of the sum over all $x_0$, $x_1$, $x_2$ that lie at distance at most $1$ of $l$, of the product 
$$ P[ l \in {L} ] P[ x_0 \leftrightarrow x ] P [ x_1 \leftrightarrow y_1 ] P [ x_2 \leftrightarrow y_2].$$
This sum corresponds to the case where the loop $\gamma$ in the event ${\mathcal T}$ visits at least two integer points (and we sum over all possible choices for $l(\gamma)$). 

Equation (\ref {twopoint}) shows the existence of a constant $w_0$ independent of $N$, such that for all $y, y' \in \Lambda_N$ (as it is easier to create a connection in $\Z^d$ than in $\Lambda_N$), 
$P [ y \leftrightarrow y'] \le w_0 /( 1 + \| y - y' \|^{d-2} )$;  it follows immediately (summing over all $y'$ that are in $y + \Lambda_{2N}$) that for some constant $w_1$, for all $N \ge 1$ and all $y \in \Lambda_N$,  
\begin {equation}
 \label {simplebound} 
 \sum_{y' \in \Lambda_N} P [ y \leftrightarrow y'] \le w_1 N^{2},
\end {equation} 
which is an inequality that we will now repeatedly use. 
For each choice of $x_0$, $x_1$ and $x_2$ (and possibly $l$ if we are in the case (a)), if we now sum over all choices of $y_1$ and $y_2$ in $\Lambda_N$, we can use (\ref {simplebound}) to see that 
\begin {eqnarray*}
 %\lefteqn {
 E [ |C(x)|^2 ]
&&\le
\sum_{x_0\in \Lambda_N}   P[ x_0 \leftrightarrow x ] (2d+1)^{4}  ( w_1 N^2)^2\\
&&
+ \sum_{(x_0, l)\in {\mathcal U}} \Bigl[ 
P [ x \leftrightarrow x_0] \times P[l \in {L} ] \times  (|l| (2d+1))^2 \times (w_1 N^2)^2  \Bigr]
\end {eqnarray*}
where ${\mathcal U}$ is the set of pairs $(x_0, l)$ satisfying (i)-(iii) where 
(i) $x_0 \in \Lambda_N$, (ii) the discrete loop $l$ has at least $2$ steps, and (iii) $x_0$ is at distance at most $1$ from $l$;   
the term $(2d+1)^{4}$ comes from the bound on the number of possible choices for $x_1$ and $x_2$ for a given $x_0$ in (a), and the term $(|l| (2d+1))^2$ comes from the possible choices for $x_1$ and $x_2$  in (b) for a given discrete loop $l$ with $|l| \ge 2$ steps). 

The first sum over $x_0$ is bounded $(2d+1)^4 w_1^3 N^6$ (using (\ref {simplebound}) again). For the second one, we can first note that for each given $x_0$, the expected number of discrete loops of length $m$ in a loop-soup in the whole of ${\Z}^d$ that pass through $x_0$ is given by the total mass of such loops under the discrete loop-measure, which is in turn expressed in terms of the probability that a random walk started from $x_0$ is back at $x_0$ after $m$ steps (see for instance \cite {WP} for such elementary considerations on loop-measures), which is bounded by some constant $w_2$ times $m^{-d/2}$. Hence, if we regroup the sum over all loops with the same length $m$, we see that the second sum over $(x_0,l)$  in ${\mathcal U}$ is bounded by  
$$ 
\sum_{x_0 \in \Lambda_N } \Bigl[ (2d+1) P [ x \leftrightarrow x_0] w_1^2 N^4\sum_{m \ge 2} 
[w_2 m^{-d/2} (m (2d+1))^2] \Bigr] .$$
The key point is now that when $ d/2 -2 > 1$, i.e., $d > 6$, then  $\sum_m m^{2 - d/2} < \infty$, so that finally, we see that this sum sum over $(x_0,l)$  in ${\mathcal U}$ is bounded by some constant times 
$$N^4 \sum_{x_0 \in \Lambda_N } 
P [ x \leftrightarrow x_0]$$
which in turn is also bounded by some constant times $N^6$ (using (\ref {simplebound}) again). Together with the bound for the sum in (a), we can therefore conclude that for some constant $w_3$, for all $N \ge 2$ and all $x \in \Lambda_N$, 
$$ E [ | C(x)|^2 ] \le w_3 N^6.$$ 
In summary, we see that $d>6$ is also the threshold at which the extended nature of the Brownian loops does not essentially influence the estimates compared to finite-range percolation. 

Similarly, for any $k  \ge 3$, by enumerating trees, and expanding in a similar way (this time, one has to sum over $k-1$ loops 
in the loop-soup that will be the nodes of the tree) using the Aizenman-Newman enumeration ideas, one obtains the existence of constants $w_4$ and $w_5$ such that for all $N$, $x$ and $k$, 
\begin {equation}
 \label {previous}
 E [|C(x)|^k ] \le w_4 k! w_5^k  N^{4k-2}.
 \end {equation}
If we then finally sum over all $x$ in $\Lambda_N$, we get that 
$$ E[ \sum_{n \le n_0} |C_n|^{k+1} ] = \sum_{x \in \Lambda_n} E [ |C(x)|^{k} ] \le w_4 k! w_5^k N^{d+4k-2}.$$ 
In particular, if $M$ denotes $\max |C_n|$, we get an upper bound for $E [ M^{k+1} ]$ 
from which one readily deduces the proposition by using Markov's inequality and choosing the appropriate $k$ (of the order of a constant times $\log N$). 
\end {proof} 

Let us now turn to the proliferation of large clusters: 
\begin {proposition} 
\label {secondprop}
With probability that tends to $1$ as $N$ tends to infinity, there exist more than $N^{d-6}/\log^2 N$ disjoint loop-soup clusters with diameter greater than $N/2$.  
\end {proposition}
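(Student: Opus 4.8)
The plan is to count pairs of far-apart connected points in the inner box, and to use that each cluster of diameter $\ge N/2$ can account for only boundedly many such pairs. Concretely, set
\[
Y := \#\{(x,y)\in \Lambda_{N/2}\times\Lambda_{N/2} : \|x-y\|\ge N/2,\ x\leftrightarrow y\},
\]
and let $X$ denote the number of clusters of diameter $\ge N/2$ and $M:=\max_{n\le n_0}|C_n|$. Any pair counted by $Y$ forces its common cluster to have diameter at least $N/2$, so distributing $Y$ over clusters and bounding the number of far pairs inside a single cluster $C$ by $|C|^2$ gives $Y\le X\,M^2$. Hence it suffices to show that with probability tending to $1$ one has simultaneously $Y\gtrsim N^{d+2}$ and $M\le c_0N^4\log N$; the latter is exactly Proposition \ref{firstprop}, and together they yield $X\ge c\,N^{d-6}/\log^2 N$ for some $c>0$, which is the asserted order of growth.

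First I would establish the first-moment lower bound $E[Y]\ge c''N^{d+2}$. There are of order $N^{2d}$ ordered pairs $(x,y)\in\Lambda_{N/2}^2$ with $\|x-y\|\ge N/2$ (take, say, $x$ in one corner region and $y$ in the opposite one), and for each such pair $\|x-y\|$ is of order $N$; by the same lower bounds on the Green's function in the box already used to prove $v_1N^2\le E[|C(x)|]$, together with Corollary \ref{prop1} (and $\arcsin t\ge t$), one gets $P[x\leftrightarrow y]\ge c'N^{-(d-2)}$. Summing gives $E[Y]\ge c''N^{d+2}$.

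The main work is to show that $Y$ concentrates, i.e.\ $\mathrm{Var}(Y)=o\big((E[Y])^2\big)$, whence Chebyshev gives $Y\ge\tfrac12 E[Y]$ with high probability. For two pair-connection events $A=\{x_1\leftrightarrow y_1\}$ and $B=\{x_2\leftrightarrow y_2\}$, the BK inequality (already invoked in the proof of Proposition \ref{firstprop}, following \cite{AN}) bounds the probability of their disjoint occurrence $A\circ B$ by $P[A]P[B]$; and if $A\cap B$ holds but not disjointly, then every realization of $A$ and $B$ must share a loop, which forces all four endpoints into a single cluster. Consequently $\mathrm{Cov}(1_A,1_B)\le P[x_1\leftrightarrow y_1,\,x_1\leftrightarrow x_2,\,x_1\leftrightarrow y_2]$, and summing over the four points (relaxing the constraints to $\Lambda_N$) yields
\[
\mathrm{Var}(Y)\ \le\ \sum_{x_1\in\Lambda_N} E\big[|C(x_1)|^3\big]\ \le\ N^d\cdot w_4\,3!\,w_5^3\,N^{10}\ =\ O\big(N^{d+10}\big),
\]
using the moment bound (\ref{previous}) with $k=3$. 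Since $(E[Y])^2\ge (c'')^2N^{2d+4}$, we obtain $\mathrm{Var}(Y)/(E[Y])^2=O(N^{6-d})\to 0$, precisely because $d>6$.

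Combining everything, with probability tending to $1$ both $Y\ge\tfrac12 c''N^{d+2}$ and $M\le c_0N^4\log N$ hold, so on this event $X\ge Y/M^2\ge \big((c''/2)/c_0^2\big)\,N^{d-6}/\log^2 N$, as desired. The main obstacle is the covariance estimate: one must argue cleanly that the only source of correlation between the two connection events beyond the disjoint (BK) contribution is the event that all four endpoints lie in one cluster, so that the variance is governed by the third-moment bound (\ref{previous}); this is exactly where the threshold $d>6$ re-enters, through the convergence of the loop-node sum established in Proposition \ref{firstprop}.
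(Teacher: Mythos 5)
Your proposal is correct and follows essentially the same route as the paper: a second-moment (Chebyshev) argument for a count of far-apart connected pairs, with $E[Y]\asymp N^{d+2}$ from Corollary \ref{prop1}, the variance bounded by $\sum_{x}E[|C(x)|^3]=O(N^{d+10})$ via the observation that non-disjoint occurrence of the two connection events forces all four points into one cluster, and the final division by $(c_0N^4\log N)^2$ using Proposition \ref{firstprop}. The only cosmetic differences are that the paper counts pairs between two fixed boxes $B_1,B_2$ (via $\sum_n|C_n\cap B_1|\,|C_n\cap B_2|$) rather than all pairs at distance $\ge N/2$ in $\Lambda_{N/2}$, and it decouples the disjoint contribution by exploring $C(x_1)$ first (Aizenman's truncation lemma) instead of quoting BK directly.
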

The proof proceeds along the same lines as the analogous result (4.8) in \cite {Aizenman}: 
\begin {proof}
One can for instance define $B_1$ and $B_2$ to be the boxes obtained by shifting $\Lambda_{N/4}$ along the first-coordinate axis by $-N/2$ and $N/2$ respectively. Each of the two boxes has circa $(N/2)^d$ points in it, they at distance at least $N/4$ from $\partial \Lambda_N$, and they are at distance circa $N/2$ from each other.
Now, Corollary \ref {prop1} readily shows that if we define 
$$ X := \sum_n  |C_n \cap B_1| \times |C_n \cap B_2|,$$
then for some positive finite constant $b_1$,
$$ E [ X ] = E [ \sum_{x_1 \in B_1, x_2 \in B_2} 1_{x_1 \leftrightarrow x_2 }] 
\sim b_1 N^{d+2} $$ 
as $N  \to \infty$.  
On the other hand, one can bound the second moment 
$$ E [ X^2 ] = \sum_{x_1, y_1 \in B_1, x_2, y_2 \in B_2} P [ \E(x_1, x_2 ,y_1, y_2) ]$$
where $\E(x_1, x_2, y_1, y_2 ) := \{x_1 \leftrightarrow x_2, y_1 \leftrightarrow y_2 \}$
using the following remark (call the {\em truncation lemma} in \cite {Aizenman}): To check if  
$\E$ holds, one can first discover $C(x_1)$. If it does contain $x_2$, $y_1$ and $y_2$ (we call this event $\E_1$), then we know already that  $\E$ holds. The only other scenario (we call this event $\E_2=\E \setminus \E_1$) for which $\E$ holds is that $y_1 \in C(x_1)$, that neither $y_1$ nor $y_2$, are in $C(x_1)$, and then that for the remaining loop-soup percolation in the complement of $C(x_1)$ in the cable-graph, $y_1$ is connected to $y_2$. Clearly, 
$$P [ \E_2] = P[\E] - P [\E_1] \le  P [ x_1 \leftrightarrow y_1] P[x_2 \leftrightarrow y_2 ]$$
(the first probability in the product is an upper bound for the probability that $y_1 \in C(x_1)$ and that neither $y_1$ nor $y_2$ are in $C(x_1)$, and the second probability is an upper bound for the conditional probability that $x_2 \leftrightarrow y_2$ in the remaining domain). 
Summing this inequality over all $x_1. x_2, y_1, y_2$, and using (\ref {previous}) one immediately gets that
\begin {eqnarray*}
\lefteqn { E [ X^2 ] - E [ X ]^2 }
\\
&&= \sum_{ x_1,  y_1 \in B_1, x_2, y_2 \in B_2} [P [\E (x_1, x_2, y_1, y_2)] -  P [ x_1 \leftrightarrow y_1] P[x_2 \leftrightarrow y_2 ]]\\ 
&& \le 
\sum_{x_1, x_2, y_1, y_2 \in \Lambda_N}  P [\E_1 (x_1, x_2, y_1, y_2) ] 
\\
&&
= E [ \sum_{n \le n_0} |C_n|^4 ] \\
&& \le b_2 N^{d+10}\end {eqnarray*}
for some constant $b_2$ independent of $N$. 
Combining this bound of the variance of $X$ with the estimate  of its mean (and noting that $d+10 < 2 (d`+2)$ because $d>6$), we see that for all $\eps$, 
$$P [ X \in [(b_1-\eps) N^{d+2}, (b_1-\eps) N^{d+2}]] \to 1$$
as $N \to \infty$.
If $X$ denotes the number of clusters that intersect both $B_1$ and $B_2$, noting that with high probability, all quantities 
$|C_n \cap B_1|$ and $|C_n \cap B_2|$ are smaller than $c_0 N^4 \log N$ (because of Proposition \ref {firstprop}), we deduce that with a probability that goes to $1$ as $N \to \infty$, 
$$ X \ge \frac { (b_1/2) \times N^{d+2}} { (c_0  N^4 \log N)^2} = (b_1/2c_0^2) \times \frac {N^{d-6}}{ \log^2 N}.$$ 
\end {proof}

\subsection {Some final comments}

We conclude with the following comments (see \cite {Werner} for more in this direction): On the one hand, we have seen that when $N \to \infty$, there will typically 
be a large number of large clusters (say of diameter greater than $N/2$), but on the other hand, only a tight number of Brownian loops of diameter comparable to $N$. In fact, when $a \in (0,d)$, the $N^{a}$-th largest Brownian loop will have a diameter of the order of $N\times N^{-a/d +o(1)}$. This means for instance that an overwhelming fraction of the numerous large clusters will contain no loop of diameter greater than $N^b$ for $b > 6/d$. In other words, if we remove all loops of diameter greater than $N^b$, one will still have at least $N^{d-6+o(1)}$ large clusters, and the estimates for the two-point function will actually remain valid. If we fix $b \in (6/d, 1)$, since $N^b$ is also much smaller than the size $N$ of the box, we can interpret this cable-graph loop-soup percolation with cut-off as a critical (or near-critical) percolation model: If we scale everything down by a factor $N$: We 
are looking at a Poissonian family of small sets, and for the chosen parameters one observes macroscopic clusters (as $N \to \infty$). 

We will discuss further aspects of loop-soup cluster percolation and the structure of the GFF in high dimensions in \cite {Werner}. In particular, when $d \ge 9$, the relation with the integrated superbrownian excursions can be made more precise.  

%, but we can already mention here one of the conjectures that naturally arise in this setting: 
%\begin {conj} 
%When $d$ is sufficiently large, the ``Gaussian'' behaviour of ordinary Bernoulli (nearest-neighbor or spread-out) critical percolation in $\Z^d$ is likely to go beyond the description of its connectivity functions. For instance, if one simply enumerates all percolation clusters in $\Z^d$ as $(C_j)_{j \ge 1}$ in some lexographic order, and one tosses an independent fair $\pm 1$ coin $\eps_j$ for each of them and defines the function $\eps(x)= \eps_j$ when $x \in C_j$, then as $\delta \to 0$, the suitably rescaled random function $\delta^{1-d/2} \eps ( \cdot / \delta )$ defined on $\delta \Z^d$ might converge in distribution to a constant multiple of the GFF in $\R^d$. 
%\end {conj} 

%\medbreak

\subsubsection*{Acknowledgement.} The author acknowledges the support of the Swiss National Science Foundation (SNF) grant  \#175505.

\bibliographystyle{plain}

\end{document}